\newtheorem{thm}{Theorem}[section]
\newtheorem{lem}[thm]{Lemma}
\newtheorem{cor}[thm]{Corollary}
\newtheorem{defi}[thm]{Definition}
\newtheorem{exa}[thm]{Example}
\newtheorem*{rem}{Remark}
\begin{document}

\title{Power Values of certain Quadratic Polynomials}
\author{Anthony Flatters}
\address{School of Mathematics, University of East Anglia, Norwich NR4 7TJ, UK}
\email{Anthony.Flatters@uea.ac.uk}
\thanks{I would like to thank Professors Walsh, Gy\"{o}ry, Hajdu and Pint\'{e}r for sending me comments on an original draft of this paper and for also providing me with additional references.}

\begin{abstract}
In this article we compute the $q$th power values of the quadratic polynomials $f$ with negative squarefree discriminant such that $q$ is coprime to the class number of the splitting field of $f$ over $\mathbb{Q}$.
The theory of unique factorisation and that of primitive divisors
of integer sequences is used to deduce a bound on the values of $q$ which is small enough to allow the remaining cases to be easily checked.
The results are used to determine all perfect power terms of certain polynomially generated integer sequences, including the Sylvester sequence.

\end{abstract}

\subjclass[2000]{Primary 11B37; Secondary 11A41; 11B39}

\keywords{Primitive divisor; Diophantine equation; Lucas sequence}

\maketitle

\section{Introduction}
In 1926, Siegel~\cite{52.0149.02} proved that an affine curve of genus at least one has only finitely many integer points. Siegel's theorem is ineffective; it gives us no way of explicitly determining all the integer points on such a curve. The equation \begin{equation} \label{eq:maineqn}
y^q=f(x)\textrm{,} \end{equation} where $q\geqslant 3$ and $f(x)$ is a quadratic polynomial with two distinct roots defines an affine curve with genus
\begin{displaymath}
\frac{(q-1)(q-2)}{2}\geqslant 1 \textrm{,} \end{displaymath}
hence (\ref{eq:maineqn}) has only finitely many integer solutions. The exact determination of all integer solutions to such an equation is generally very difficult. In \cite{MR0234912}, Baker gave the first explicit upper bounds for the integer solutions of equation (\ref{eq:maineqn}) in the case where $f$ has at least two simple zeros. This result was obtained using his theorem about lower bounds for linear forms in the logarithms of algebraic numbers. Given the nature of the theory, these bounds are typically very large and the following upper bound was derived
\begin{displaymath}
\max\{\vert x \vert , \vert y \vert \}< \exp\exp((5q)^{10}(n^{10n}H)^{n^2}) \textrm{,} \end{displaymath}
where $n=\deg(f)$, $H$ is the height of $f$. Since Baker's result, there have been many refinements to the theory of linear forms in logarithms which allow smaller upper bounds to be obtained, for example, see the papers \cite{MR759041,MR1458749,MR1116106,MR0480426,MR1348763}. In \cite{MR1631771}, a method is given for the complete determination of integral solutions to an equation of the form $ay^p=f(x)$ where $a\in\mathbb{Z}\setminus\{0\},p\geqslant 3$ and $f(x)$ is separable of degree at least 2. We will approach the problem of finding solutions differently. It was first proved by Tijdeman~\cite{MR0560494}, using Baker's transcendence methods, that if $f$ has at least 2 simple rational zeros and if $y^q=f(x)$ has an integer solution with $\vert y\vert>1$, then $q$ is bounded above by a computable constant depending only on $f$. Later this was improved by Schinzel and Tijdeman~\cite{MR0422150}, who showed that for $P(x)\in\mathbb{Q}[x]$ with at least 2 distinct zeros, an integer solution $\vert y\vert>1$, to the equation $y^m=P(x)$ implies $m$ is bounded by an effectively computable constant depending only on $P$. However, their technique uses lower bounds for linear forms in logarithms and the bound for $m$, is once again, very large. There have been several improvements to Schinzel and Tijdeman's result. In \cite{MR1657483}, the authors prove that if $f$ is a monic irreducible polynomial of degree $n\geqslant 2$, $b\in\mathbb{Z}\setminus\{ 0\}$ and $y^z=bf(x)$ in integers $x,y,z$, $z>1$, then $z<cM^{3n}(\log\vert 2b \vert )^3$ where $c$ is an effectively computable constant depending only on $n$ and the Mahler measure $M$ of $f$. See also \cite{MR1119684} for the result that if $f(x)$ is a monic irreducible polynomial of degree $n\geqslant 2$ then $y^z=f(x)$ in integers, then $z<(6n^3)^{30n^3}\vert D(f) \vert^{5n^2}$, where $D(f)$ is the discriminant of $f$. For further results on this area consult \cite{MR2264658,MR2107952,MR2310662,DEqns,MR891406}. In this paper, we use Bilu, Hanrot and Voutier's wonderful theorem about prime appearance in Lucas sequences (see \cite{MR1863855}) to give a very small bound on the exponent $q$ in equation (\ref{eq:maineqn}) (independent of the equation) in the case where $f$ is a quadratic polynomial whose discriminant belongs to a subset of the negative integers.
The bound on $q$ is small enough to allow a bare-hands approach to computing all integral solutions, and in particular does not use any transcendence methods directly. We begin by stating our most general result.
\begin{thm} \label{thm:mainthm1}
Let $f$ be a monic quadratic polynomial with integral coefficients such that $D(f)$ is negative and squarefree with the property that the class number $h$ of $\mathbb{Q}(\sqrt{D(f)})$ is greater than one. Let $q\geqslant 2$ be a prime, and assume $x,y$ are integers such that
\begin{displaymath}
y^q=f(x)\textrm{.} \end{displaymath}
Then $q\leqslant \max\{3,P(h)\}$, where $P(h)$ denotes the greatest prime factor of $h$. \end{thm}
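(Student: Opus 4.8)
The plan is to exploit the factorisation of $f(x)$ over the ring of integers $\mathcal{O}$ of the imaginary quadratic field $K=\mathbb{Q}(\sqrt{D(f)})$. Since $f$ is monic of degree two with squarefree discriminant $D(f)$, after the substitution $x\mapsto x$ we may write $4f(x)=(2x+b)^2-D(f)$ for the appropriate integer $b$, and hence $y^q=f(x)$ yields a relation of the shape $(2x+b)^2-D(f)=4y^q$. Setting $\alpha=\frac{(2x+b)+\sqrt{D(f)}}{2}$ and $\bar\alpha$ its conjugate (these are algebraic integers because $D(f)$ is the discriminant of $f$, so $D(f)\equiv 0,1\pmod 4$ and the parity works out), we obtain the key ideal equation $\alpha\bar\alpha=N(\alpha)=y^q$ inside $\mathcal{O}$. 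The first real step is therefore to analyse the greatest common divisor of the ideals $(\alpha)$ and $(\bar\alpha)$: using that $D(f)$ is squarefree one shows that any prime ideal dividing both must divide $(\sqrt{D(f)})=(\alpha-\bar\alpha)$, and a short ramification argument bounds this common part, so that up to a controlled factor the ideal $(\alpha)$ is itself a perfect $q$th power of an ideal, say $(\alpha)=\mathfrak{a}^q$.

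Once $(\alpha)=\mathfrak{a}^q$, I would pass to the ideal class group. The class of $\mathfrak{a}^q$ is trivial since it equals the principal ideal $(\alpha)$, so in the class group, whose order is $h$, the element $[\mathfrak{a}]$ satisfies $[\mathfrak{a}]^q=1$. If $q\nmid h$ then $q$ is coprime to the group order, so $[\mathfrak{a}]^q=1$ forces $[\mathfrak{a}]=1$, i.e.\ $\mathfrak{a}=(\beta)$ is principal. This is precisely the hypothesis-driven dichotomy: either $q\mid h$, in which case $q\leqslant P(h)$ and we are done, or $q\nmid h$ and $\alpha$ is (up to a unit) a perfect $q$th power $\varepsilon\beta^q$ in $\mathcal{O}$. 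Because $K$ is imaginary quadratic, its unit group is finite — generically $\{\pm1\}$, with the two exceptional fields $\mathbb{Q}(i)$ and $\mathbb{Q}(\sqrt{-3})$ having units of order $4$ and $6$ — and each unit can be absorbed into $\beta^q$ after adjusting $\beta$, at worst changing the exponent relation only when $q$ divides the unit order, which contributes the small primes accounted for by the $\max\{3,\cdot\}$.

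The heart of the argument is then the step $\alpha=\beta^q$ (after unit absorption), which I would convert into a statement about a Lucas sequence. Writing $\alpha=u+v\sqrt{D(f)}$ and examining the imaginary parts, the equation $\alpha-\bar\alpha=\beta^q-\bar\beta^q$ displays $\sqrt{D(f)}$, equivalently the integer $v$ up to the common factor extracted earlier, as a term of the Lucas sequence with roots $\beta,\bar\beta$. Here is where Bilu, Hanrot and Voutier's theorem on primitive divisors enters: the term $\beta^q-\bar\beta^q$ must have a primitive prime divisor whenever $q$ is larger than the known finite list of exceptional indices, yet the structure of our equation (the smallness and explicit factorisation of $\alpha-\bar\alpha=\sqrt{D(f)}$ times a bounded factor) forbids such a primitive divisor, producing a contradiction for all large $q$. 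The main obstacle, and the step requiring the most care, will be verifying that $(\beta,\bar\beta)$ genuinely form a Lucas pair (coprimality and the ratio $\beta/\bar\beta$ not being a root of unity) so that the primitive divisor theorem applies, and then correctly reading off from the tables of defective Lucas sequences exactly which small indices $q$ survive; this bookkeeping, together with the unit-order primes $2,3$ from $\mathbb{Q}(i)$ and $\mathbb{Q}(\sqrt{-3})$, is what yields the clean bound $q\leqslant\max\{3,P(h)\}$.
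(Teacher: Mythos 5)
Your proposal follows essentially the same route as the paper's own proof: factor $f(x)=(x-\alpha)(x-\bar\alpha)$ over the ring of integers of $\mathbb{Q}(\sqrt{D(f)})$, show the two conjugate ideal factors share at most ramified primes (the paper proves outright coprimality of the ideals using squarefreeness of $D(f)$ and an elementary argument that $\gcd(D(f),y)=1$), deduce $\langle x-\alpha\rangle=\mathfrak{a}^q$, use $\gcd(q,h)=1$ to force $\mathfrak{a}$ principal, absorb the (finite, order dividing $6$) unit, and convert $\alpha-\bar\alpha=\pm\sqrt{D(f)}$ into $u_q(\beta,\bar\beta)=\pm 1$, which contradicts the Bilu--Hanrot--Voutier theorem for prime $q>3$ once the exceptional table is checked against the constraints that the Lucas discriminant equals the squarefree $D(f)$ and $h>1$. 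The only parts you leave schematic --- the ramification/coprimality bookkeeping and the verification that no defective Lucas pair survives these constraints at a prime index greater than $3$ --- are precisely the routine checks the paper carries out, so the proposal is sound and is not a genuinely different argument.
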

We now consider the case where the ring of integers of the splitting field of $f$ is a unique factorisation domain.
\begin{thm}\label{thm:mainthm}
Let $f$ be a monic quadratic polynomial with integer coefficients. Further, suppose that $-D(f)\in\{7,11,19,43,67,163\}$. If the equation (\ref{eq:maineqn}) is soluble in integers $x,\vert y\vert >1,q>2$ prime, then $q\leqslant q_0$, where
\begin{displaymath}
q_0=\left\{ \begin{array}{ll}
13 & \textrm{if $D(f)=-7$,} \\
7 & \textrm{if $D(f)=-19$,} \\
5 & \textrm{if $D(f)=-11$,} \\
4 & \textrm{if $D(f)=-43,-67,-163$}\textrm{.}  \end{array}\right.\end{displaymath}
Moreover, if $q$ is prime and $D(f)=-3,-8$, then equation (\ref{eq:maineqn}) has no integer solutions $x,y$ with $y>1$ for $q>3$.
\end{thm}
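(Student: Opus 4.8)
The plan is to exploit that each field $K=\mathbb{Q}(\sqrt{D(f)})$ occurring here has class number one, so its ring of integers $\mathcal{O}_K$ is a principal ideal domain, and to convert a solution of $y^q=f(x)$ into a statement about a Lucas sequence to which the primitive divisor theorem of \cite{MR1863855} applies. Writing $f(x)=(x-\alpha)(x-\bar\alpha)$ with $\alpha=\tfrac{-b+\sqrt{D(f)}}{2}$ and noting $\alpha-\bar\alpha=\sqrt{D(f)}$, I would first factor the equation in $\mathcal{O}_K$ as $(y)^q=(x-\alpha)(x-\bar\alpha)$ and study the ideal $\mathfrak{g}=\gcd(x-\alpha,x-\bar\alpha)$, which divides $(\alpha-\bar\alpha)=(\sqrt{D(f)})$.

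Second, I would prove that the two factors are essentially coprime. Every prime dividing $(\sqrt{D(f)})$ is ramified and hence fixed by complex conjugation, so $v_{\mathfrak{p}}(x-\alpha)=v_{\mathfrak{p}}(x-\bar\alpha)$ equals $\tfrac12 q\,v_{\mathfrak{p}}(y)$; since $q\geqslant 3$ and $v_{\mathfrak{p}}(\mathfrak{g})\leqslant v_{\mathfrak{p}}(\sqrt{D(f)})$ is small, this valuation is forced to vanish. For the six discriminants $-7,-11,\dots,-163$ the prime $2$ is unramified and this yields outright coprimality; for $D(f)=-8$ the prime $2$ ramifies, and the same valuation computation shows that either $q\leqslant 3$ or $y$ is odd, after which coprimality again holds. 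Because $\mathcal{O}_K$ is a PID, coprimality gives $(x-\alpha)=(\gamma)^q$, i.e. $x-\alpha=u\gamma^q$ for a unit $u$ and some $\gamma\in\mathcal{O}_K$. The unit can be absorbed into $\gamma^q$: for all fields but $\mathbb{Q}(\sqrt{-3})$ the units are $\pm1$ and $q$ is odd, while for $\mathbb{Q}(\sqrt{-3})$ one has $\gcd(q,6)=1$ once $q>3$, so raising to the $q$th power permutes the sixth roots of unity.

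Third, subtracting the conjugate relation gives $\gamma^q-\bar\gamma^q=\pm\sqrt{D(f)}$. Setting $P=\gamma+\bar\gamma\in\mathbb{Z}$ and $Q=\gamma\bar\gamma=y\in\mathbb{Z}$, the pair $(\gamma,\bar\gamma)$ has discriminant $(\gamma-\bar\gamma)^2=P^2-4Q$, which equals $D(f)$ times a perfect square; dividing this relation by $\gamma-\bar\gamma$ shows that the Lucas number $u_q=\tfrac{\gamma^q-\bar\gamma^q}{\gamma-\bar\gamma}$ equals $\pm1$, which in turn forces $(\gamma-\bar\gamma)^2=D(f)$ exactly. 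In particular $u_q$ has no primitive divisor. I would then check that $(\gamma,\bar\gamma)$ is a genuine Lucas pair (coprime $P,Q$, with $\gamma/\bar\gamma$ not a root of unity, the degenerate alternative forcing $x=-b/2$ and hence a non-integral or non-$q$th-power value of $y$). The theorem of Bilu, Hanrot and Voutier now bounds $q$: for $q>30$ the term $u_q$ always has a primitive divisor, and their explicit table of defective Lucas pairs for $q\leqslant30$ records exactly which pairs fail. Reading off those entries whose discriminant equals the given $D(f)$ produces the stated values of $q_0$ (the entry at index $13$ for $D(f)=-7$ being the pair underlying the Ramanujan--Nagell equation, and similarly indices $7$ and $5$ for $-19$ and $-11$, with no admissible index exceeding $3$ for $-43,-67,-163$).

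Finally, for $D(f)=-3$ and $D(f)=-8$ I would argue that no defective pair of the required discriminant survives for $q>3$: every pair with discriminant $-3$ is degenerate, its ratio being a sixth root of unity, so the relation $\gamma^q-\bar\gamma^q=\pm\sqrt{-3}$ forces $|y|\leqslant1$; and the Bilu--Hanrot--Voutier table contains no defective index larger than $3$ with discriminant $-8$. This yields the claimed nonexistence of solutions with $y>1$ for $q>3$. I expect the main obstacle to be the careful bookkeeping in the second step---controlling the shared ramified (and, for $-8$, dyadic) primes and the units precisely enough to land on a genuine Lucas pair---together with the bridge in the third step: matching the normalisation of discriminants used in \cite{MR1863855} to the $D(f)$ here, so that the correct table rows, and hence the sharp constants $q_0$, are selected.
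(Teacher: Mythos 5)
Your proposal follows essentially the same route as the paper: factor $y^q=f(x)$ in the ring of integers of $\mathbb{Q}(\sqrt{D(f)})$, which is a principal ideal domain for every listed discriminant; prove that $x-\alpha$ and $x-\bar{\alpha}$ are coprime; absorb the unit (using that $q$ is coprime to the order of the unit group); subtract the conjugate equation to obtain $u_q(\gamma,\bar{\gamma})=\pm 1$; and then read off the Bilu--Hanrot--Voutier table, since a prime index $q>4$ with $u_q=\pm 1$ must be a defective index for a pair of discriminant exactly $D(f)$. Your valuation-theoretic coprimality argument at the ramified prime is a clean equivalent of the paper's (the paper instead rewrites the equation as $y^q=D(f)\cdot A\cdot \bar{A}$ with pairwise coprime factors and notes that $D(f)$ cannot be a $q$th power), and your explicit attention to the Lucas-pair axioms (coprime $P=\gamma+\bar{\gamma}$, $Q=\gamma\bar{\gamma}$, non-degeneracy) addresses a point the paper silently skips; for the six prime discriminants these checks do go through, because any common prime factor of $P$ and $Q$ would divide $P^2-4Q=D(f)$ and hence force $|D(f)|\mid y$, contradicting the coprimality of the factors.

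However, your treatment of $D(f)=-3$ contains a genuine error. You assert that ``every pair with discriminant $-3$ is degenerate, its ratio being a sixth root of unity,'' and deduce $|y|\leqslant 1$. This is false, and it is contradicted by the very table you invoke: the entry $(a,b)=(5,-3)$ at $n=10$ is the pair $\gamma=\frac{5+\sqrt{-3}}{2}$, $\bar{\gamma}=\frac{5-\sqrt{-3}}{2}$, which has discriminant $-3$, coprime $P=5$, $Q=7$, and ratio $\gamma/\bar{\gamma}=\frac{11+5\sqrt{-3}}{14}$, which is not an algebraic integer and hence not a root of unity; Bilu--Hanrot--Voutier would have no reason to list it as a defective Lucas pair if it were degenerate. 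In fact there are infinitely many non-degenerate Lucas pairs of discriminant $-3$ (take $\gamma=\frac{a+\sqrt{-3}}{2}$ with $a$ odd, $3\nmid a$, $|a|\geqslant 5$); the degenerate ones are only $a=\pm 1$, which indeed force $y=N(\gamma)=1$, and $a=\pm 3$, which is excluded because then $3\mid y$, contradicting the coprimality of the factors. So for $D(f)=-3$ you must argue exactly as you did for $D(f)=-8$, and as the paper does: among genuine Lucas pairs of discriminant $-3$, the only defective index $n>4$ in the table is $n=10$, which is not prime, so $u_q(\gamma,\bar{\gamma})=\pm 1$ is impossible for prime $q>3$. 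With that local repair, the rest of your argument is sound and matches the paper's proof.
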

\begin{rem}
Monic quadratic polynomials $f,g$ have equal discriminant if and only if $f(x)=g(x+k)$ for some $k\in\mathbb{Z}$. It follows that in order to determine the integer solutions to the equations $y^q=g(x)$ where $D(g)$ is fixed, it is enough to determine them for one polynomial with discriminant $D(g)$ and then there is a bijection between the set of integer solutions of that equation and the set of integer solutions to another such equation.\end{rem}
An immediate corollary to Theorem \ref{thm:mainthm} is the following.
\begin{cor}\label{cor:intsolns}
Let $\vert y \vert >1$ be an integer which satisfies equation (\ref{eq:maineqn}) with $q>1$, then
\begin{enumerate}
\item[(a)] If $D(f)=-7$, the only solutions are
$$(y,q)\in\{(2,13),(2,5),(2,3),(\pm 2,2)\}.$$
\item[(b)] If $D(f)=-11$, the only solutions are $(y,q)\in\{(3,5),(\pm 3,2)\}$.
\item[(c)] If $D(f)=-19$, the only solutions are $(y,q)\in\{(5,7),(\pm 5,2)\}$.
\item[(d)] If $D(f)=-8$, the only solution is $(y,q)=(3,3)$.
\item[(e)] If $D(f)=-43$, the only solution is $(y,q)=(\pm 11,2)$.
\item[(f)] If $D(f)=-67$, the only solution is $(y,q)=(\pm 17,2)$.
\item[(g)] If $D(f)=-163$, the only solution is $(y,q)=(\pm 41,2)$.
\item[(h)] If $D(f)=-3$, the only solution is $(y,q)=(7,3)$. \end{enumerate}
\end{cor}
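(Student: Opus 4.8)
The plan is to derive the Corollary from Theorem~\ref{thm:mainthm} by explicitly resolving the finitely many equations that survive the bound on $q$. First I would reduce to prime exponents: if $|y|>1$ solves $y^q=f(x)$ with $q>1$ composite and $\ell$ is a prime dividing $q$, then $(y^{q/\ell})^\ell=f(x)$ with $|y^{q/\ell}|>1$, so every solution descends from one with prime exponent. For $q>2$ prime, Theorem~\ref{thm:mainthm} gives $q\leqslant q_0$, so only the exponent $q=2$ and the odd primes $q\leqslant q_0$ remain for each admissible discriminant. By the Remark it suffices to work with a single representative polynomial for each value of $D(f)$ (say $f(x)=x^2+x+\tfrac{1-D(f)}4$ when $D(f)$ is odd, and $f(x)=x^2+2$ when $D(f)=-8$), because the $y$-coordinate is invariant under the bijection $x\mapsto x+k$; this is exactly why the Corollary records only the pairs $(y,q)$.

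For the exponent $q=2$ I would complete the square. Writing $f(x)=x^2+bx+c$ and multiplying by $4$ gives $(2x+b)^2-(2y)^2=D(f)$, hence
\[
(2x+b-2y)(2x+b+2y)=D(f).
\]
Since $D(f)$ is squarefree there are only finitely many factorisations of $D(f)$ into two integers of equal parity, and running through them produces precisely the square values listed (for instance $(\pm2,2)$ for $D(f)=-7$ and $(\pm41,2)$ for $D(f)=-163$), while for $D(f)=-3,-8$ it yields no solution with $|y|>1$.

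The substantive part is the odd prime case, which I would handle by factoring in $\mathcal{O}_K$, where $K=\mathbb{Q}(\sqrt{D(f)})$ is one of the imaginary quadratic fields of class number one and hence a principal ideal domain. Writing $f(x)=(x-\alpha)(x-\overline{\alpha})$ with $\alpha-\overline{\alpha}=\sqrt{D(f)}$, the two factors differ by $\overline{\alpha}-\alpha=-\sqrt{D(f)}$, whose generated ideal is squarefree; a short valuation argument at a ramified prime then shows that for $q\geqslant2$ the factors $x-\alpha$ and $x-\overline{\alpha}$ are coprime. Unique factorisation forces $x-\alpha=u\gamma^q$ for a unit $u$ and some $\gamma\in\mathcal{O}_K$. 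For $D(f)<-4$ and for $D(f)=-8$ the unit group is $\{\pm1\}$, and since $q$ is odd the unit is itself a $q$th power and may be absorbed into $\gamma$; the same holds for $D(f)=-3$ once $\gcd(q,6)=1$, leaving only the single pair $(D(f),q)=(-3,3)$, where the three cube-classes of units must be carried along explicitly. Comparing the coefficients of $\sqrt{D(f)}$ on the two sides of $x-\alpha=\gamma^q$ then gives $b\,U_q=-1$, where $b\in\mathbb{Z}$ is defined by $\gamma-\overline{\gamma}=b\sqrt{D(f)}$ and $U_q=(\gamma^q-\overline{\gamma}^q)/(\gamma-\overline{\gamma})\in\mathbb{Z}$ is the associated Lucas number; hence $b=\pm1$ and $U_q=\mp1$. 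This collapses the two free parameters of $\gamma$ to one, turning $U_q=\pm1$ into a single polynomial equation of degree $q-1$ in the remaining coordinate, whose finitely many integer roots I would enumerate and convert back into $(x,y)$, recovering $y=N(\gamma)$.

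The main obstacle is the disciplined bookkeeping of this last step: for each of the few residual pairs $(D(f),q)$ one must solve the explicit equation $U_q=\pm1$, test every candidate against $y^q=f(x)$, and certify that no solution other than those listed occurs --- in particular handling the unit cosets for $(-3,3)$ and confirming the absence of solutions at the exponents (such as $q=7,11$ for $D(f)=-7$) that do not appear in the Corollary. All of this is finite and elementary precisely because coprimality, the principal ideal domain property, and the oddness of $q$ together guarantee that each conjugate factor is an honest $q$th power, reducing an a priori two-parameter search to the one-variable Lucas equations above.
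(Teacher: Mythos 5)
Your plan follows the same route as the paper's proof: reduce to prime exponents, bound $q$ via Theorem \ref{thm:mainthm} (hence Bilu--Hanrot--Voutier), settle $q=2$ by factoring $(2x+b-2y)(2x+b+2y)=D(f)$ over $\mathbb{Z}$, and settle the odd primes by factoring $x-\alpha$ in the class-number-one ring, forcing coprimality, absorbing units, and extracting the Lucas relation. Where you deviate you are in fact sharper than the paper: from $\gamma-\bar{\gamma}=V\sqrt{D(f)}$ the relation $Vu_q(\gamma,\bar{\gamma})=-1$ forces $V=\pm 1$, so each surviving exponent becomes a one-variable polynomial equation in $U$; the paper only extracts the weaker divisibility $V\mid 2^{q-1}$ from its coefficient equation $f_1(U,V)=-2^{q-1}$ and enumerates all $V=\pm 2^d$ with PARI's polroots. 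Likewise, you check $q=7,11$ for $D(f)=-7$ by direct computation, where the paper first rules them out using the equivalence classes in Theorem \ref{thm:BHVthm}; both are legitimate.

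The genuine gap is part (h), the case $(D(f),q)=(-3,3)$ --- precisely the case that produces the listed solution $(7,3)$, and the one the paper itself never writes out (it proves only part (a), and its theorem-level treatment of $D(f)=-3$ assumes $q>3$). You correctly flag that the units of $\mathbb{Z}[\omega]$ are not all cubes, but flagging is not solving, and your closing justification --- that ``each conjugate factor is an honest $q$th power'' --- fails exactly here. The cube classes of units are represented by $1,\zeta_6,\zeta_6^2$ with $\zeta_6=\frac{1+\sqrt{-3}}{2}$. Your Lucas reduction applies only to the trivial class, where it gives $3V(U^2-V^2)=\pm 4$, which has no integer solutions; the actual solution $(x,y)=(19,7)$ of $y^3=x^2-x+1$ lies in a nontrivial class, namely $x-\alpha=\zeta_6\gamma^3$ with $\gamma=\frac{5-\sqrt{-3}}{2}$. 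Comparing coefficients of $\sqrt{-3}$ in that identity yields a binary cubic Thue equation, e.g. $U^3+3U^2V-9UV^2-3V^3=8$ (satisfied by $(U,V)=(5,-1)$), not a polynomial equation in one variable, so ``enumerate the finitely many integer roots'' does not apply. Certifying that its solutions are only the ones giving $(7,3)$ requires a Thue-equation or elliptic-curve computation (note $4y^3=(2x-1)^2+3$ is a Mordell-type curve; compare the paper's own recourse to MAGMA for the cubic case of its $D=-23$ example). Until you supply that step, part (h) --- and with it the paper's Sylvester-sequence application, Corollary \ref{cor:gstp1} --- remains unproved in your write-up.
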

\begin{rem}
The procedure which is implemented to derive this corollary also works for the case $D(f)=-4$. There is no need for us to state it here; by our previous remark it suffices to study the equation \begin{displaymath}
y^q=x^2+1 \textrm{,} \end{displaymath}
which was shown to have no non-trivial solutions by Lebesgue in \cite{lebesgue1850}.\end{rem}
Bugeaud \cite{MR1838399} (with a correction by Bilu \cite{MR1930991}) proved that for $D_1,D_2$ squarefree positive integers, the only solutions of the Diophantine equation
\[D_1x^2+D_2^m=4y^n\] in positive integers $x,y,m$ odd, $n\geqslant 5$ prime with $\gcd(D_1x,D_2y)=1$ and $\gcd(n,h(\mathbb{Q}(\sqrt{-D_1D_2})))=1$ are given by \[
(y,n)\in\{(2,5),(2,7),(2,13),(3,5),(3,7),(4,7),(5,7)\}\textrm{.}\]
Our main results can be extracted from Bugeaud's. The approach here also uses the deep result of Bilu, Hanrot and Voutier \cite{MR1863855} but is more explicit in that we identify (in the case $D_1=m=1$) the equations where each of these powers appear. Many special forms of this equation have been considered via similar methods in the papers \cite{MR1916082,MR1928911} and the survey article \cite{MR2286850}.
\subsection{Applications}
The above results can be used in the explicit determination of all perfect power terms in sequences generated by certain quadratic polynomials. The study of perfect power terms in integer sequences is becoming increasingly popular. In \cite{MR697495}, it is shown using Baker-type estimates that any non-degenerate binary linear recurrence sequence has only a finite number of terms which are perfect powers, and in \cite{MR915504} the same result was proven for non-degenerate $n$-th order linear recurrences. In \cite{MR0163867} it is shown that the only squares in the Fibonacci sequence are 0,1,144 and in \cite{MR733078} it is shown using transcendence methods that the only cubes in the Fibonacci sequence are 0,1,8. In \cite{MR2215137} it is shown that 0,1,8,144 are the only perfect power terms in the Fibonacci sequence, which was a long standing open problem. This result uses a combination of Baker theory and the modular method which has grown out of Wiles' proof of Fermat's last theorem. In addition, for results on perfect powers in arithmetic progressions see the papers \cite{MR2205718,Gyory,MR2134477} which also use a combination of classical methods and the modular method. \newline\newline The sequences that interest us are the following.
\begin{defi} Let $g_m(x)=x^2-mx+m$ where $m\in\mathbb{N}$ and furthermore choose $a\in\mathbb{N}$ such that $a>m$ and $\gcd(a,m)=1$. Fix $m\neq 0,4$ and define a sequence $G^{(m)}(a)=(G_n^{(m)}(a))_{n\geqslant 0}$ where $G_n^{(m)}(a)=g_m^n(a)$, where $g_m^n$ denotes the $n$-th iterate of $g_m$. We will call $G^{(m)}(a)$ a generalised Sylvester sequence of type $m$. \end{defi}
\begin{rem}
Note that the assumption $a>m$ will ensure that the terms of these sequences are positive and strictly increasing, and therefore non-periodic. \end{rem}
This class of sequences contains, as special cases, the Fermat numbers \cite[A000058]{Sloanesite} which is $G^{(2)}(3)$ and the Sylvester sequence $G^{(1)}(2)$ \cite[A000215]{Sloanesite}. It was shown by Mohanty in \cite{MR514325} that the $n$-th term $G_n^{(m)}(a)$, of a generalised Sylvester sequence of type $m$ satisfies the following special recurrence relation,
\begin{displaymath}
G_n^{(m)}(a)=m+(a-m)G_0^{(m)}(a)G_1^{(m)}(a)...G_{n-1}^{(m)}(a)\textrm{,}\end{displaymath}
which when combined with an easy congruence condition allows one to show that any two distinct terms in this sequence are coprime. Aside from this the Sylvester sequence has some unusual properties which make it especially interesting. The Sylvester sequence has the property that its $n$-th term is the closest integer to $H^{2^n}$ for some real number $H>0$, see \cite{Sloanesite}. In fact this property holds for all sequences $G^{(1)}(a)$, and in general $G_n^{(m)}(a)$ is the closest integer to $H_1^{2^n}+\frac{m-1}{2}$ for some real number $H_1>0$, (see \cite{Flatters}) which can be derived from the work done in \cite{MR0148605}. The Sylvester sequence gives a way of obtaining infinitely many Egyptian fraction representations of 1, see \cite{Sloanesite}. A consequence of Corollary \ref{cor:intsolns} is that the Sylvester sequence has no terms which are perfect powers. This fact seems not to have been previously established, all that has been known is that there are no terms in $G^{(1)}(2)$ which are squares, \cite{Sloanesite}. In fact we can deduce much more.
\begin{cor} \label{cor:gstp1}
The only perfect power terms in a generalised Sylvester sequence of type 1 are $G_0^{(1)}(a)$ when $a$ is itself a perfect power, and $G_1^{(1)}(19)$. \end{cor}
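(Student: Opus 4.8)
The plan is to observe that every term of the sequence beyond the seed is a value of one fixed quadratic of discriminant $-3$, and then to feed this into Corollary~\ref{cor:intsolns}(h). Writing $g_1(x)=x^2-x+1$, its discriminant is $D(g_1)=(-1)^2-4=-3$, which is exactly the case covered by part (h). By the definition of the sequence, for every $n\geqslant 1$ we have
\begin{displaymath}
G_n^{(1)}(a)=g_1\bigl(G_{n-1}^{(1)}(a)\bigr)=\bigl(G_{n-1}^{(1)}(a)\bigr)^2-G_{n-1}^{(1)}(a)+1\textrm{.}
\end{displaymath}
Thus if $G_n^{(1)}(a)=y^q$ is a perfect power for some $n\geqslant 1$, then $(x,y)=\bigl(G_{n-1}^{(1)}(a),y\bigr)$ is an integer solution of equation (\ref{eq:maineqn}) for the polynomial $f=g_1$.

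First I would reduce to the case of prime exponent: a perfect power is a $q$th power for some prime $q$ dividing the original exponent, so we may assume $q$ is prime. Since $a>1$ and the sequence is strictly increasing, every term satisfies $G_n^{(1)}(a)\geqslant 2$, whence $|y|>1$ is automatic. Corollary~\ref{cor:intsolns}(h) then applies and tells us that the only possibility is $(y,q)=(7,3)$; that is, $G_n^{(1)}(a)=7^3=343$, and $G_{n-1}^{(1)}(a)$ must be a root of $x^2-x+1=343$. The integer roots of this equation are $x=19$ and $x=-18$, and since every term of the sequence is positive we are forced to have $G_{n-1}^{(1)}(a)=19$.

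It then remains to locate the terms equal to $19$. If $n-1=0$ this says $a=19$, giving the exceptional term $G_1^{(1)}(19)=343$. If instead $n-1\geqslant 1$, then $19=g_1\bigl(G_{n-2}^{(1)}(a)\bigr)$, so $G_{n-2}^{(1)}(a)$ would be an integer root of $x^2-x-18=0$; but the discriminant of this quadratic is $73$, which is not a perfect square, so no such integer exists. Hence $19$ can occur only as the seed $G_0^{(1)}(a)=a$, and the unique perfect power term of index $n\geqslant 1$ is $G_1^{(1)}(19)$. Finally, the term of index $0$ is $G_0^{(1)}(a)=a$, which is a perfect power precisely when $a$ is, and combining the two cases yields the claimed list.

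The substantive input is entirely Corollary~\ref{cor:intsolns}(h); everything else is elementary. I expect no genuine obstacle, and the only points needing care are the two routine reductions used to invoke the corollary, namely confirming that the exponent may be taken prime and that the size condition $|y|>1$ is forced by $a>1$ together with monotonicity, so that the corollary is legitimately applicable to each term.
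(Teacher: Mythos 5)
Your proposal is correct and follows essentially the same route as the paper: both reduce a perfect power term of index $n\geqslant 1$ to an integer solution of $y^q=x^2-x+1$, invoke Corollary~\ref{cor:intsolns}(h) (discriminant $-3$) to force $(y,q)=(7,3)$, solve $x^2-x+1=343$ to get the previous term $19$, and then rule out $19$ appearing anywhere except as the seed. Your explicit discriminant check that $x^2-x-18=0$ has no integer root, and your remarks on prime exponents and $|y|>1$, merely make precise details the paper treats as immediate.
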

We therefore know exactly which inputs give rise to perfect power terms and the position of these perfect powers in the sequence. The methods which we apply can also be used to give results for generalised Sylvester sequences of types 2 and 3.
\section{Proof of Main Results}
 Throughout this section for $\alpha$ a quadratic algebraic integer we denote by $\bar{\alpha}$, the algebraic conjugate of $\alpha$ not equal to $\alpha$ and by $u_n(\alpha,\bar{\alpha})$, the expression $\frac{\alpha^n-\bar{\alpha}^n}{\alpha-\bar{\alpha}}$. In addition by $\langle x \rangle$ we will mean the principal ideal generated by $x$. We begin with the following definition which will allow us to state the theorem of Bilu, Hanrot and Voutier that is instrumental in the proof of Theorem \ref{thm:mainthm} and hence Corollary \ref{cor:intsolns}.
\begin{defi}
Let $A=(a_i)_{i\geqslant 1}$ be an integer sequence. We say that a prime $p$ is a primitive prime divisor of $a_n$ if $p\mid a_n$ but $p \nmid a_m$ for any $m<n$ with $a_m\neq 0$. \end{defi}
\begin{thm}[\bf Bilu, Hanrot and Voutier~ \cite{MR1863855}]\label{thm:BHVthm}
Let $a,b,n\in\mathbb{Z}$ with $4<n\leqslant 30$ and $n\neq 6$. Then, up to equivalence, all Lucas pairs $(\alpha,\bar{\alpha})=(\frac{a+\sqrt{b}}{2},\frac{a-\sqrt{b}}{2})$ and $n$ such that $u_n(\alpha,\bar{\alpha})$ fails to have a primitive prime divisor are listed in the following table.
\begin{center}
\begin{tabular}[c]{|c|l|}
\hline
$n$ & $(a,b)$  \\
\hline
5 & $(1,-7),(1,-11),(12,-76),(12,-1364)$ \\
\hline
7 & $(1,-19)$  \\
\hline
8 & $(2,-24),(1,-7)$  \\
\hline
10 & $(2,-8),(5,-3),(5,-47)$ \\
\hline
12& $(1,5),(1,-7),(1,-11),(2,-56),(1,-15),(1,-19)$  \\
\hline
13& $(1,-7)$  \\
\hline
18 & $(1,-7)$ \\
\hline
30 & $(1,-7)$  \\
\hline
\end{tabular}
\end{center}
In particular, for all Lucas pairs $(\alpha,\bar{\alpha})$, $u_n(\alpha,\bar{\alpha})$ has a primitive prime divisor for each $n>30$.
\end{thm}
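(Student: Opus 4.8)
Since this final statement is the deep theorem of Bilu, Hanrot and Voutier, quoted here from \cite{MR1863855}, I would not attempt an independent argument but rather reconstruct the strategy of its proof. The natural starting point is the cyclotomic factorisation of the Lucas numbers: writing $\alpha^n-\bar\alpha^n=\prod_{d\mid n}\Phi_d(\alpha,\bar\alpha)$, where $\Phi_d(\alpha,\bar\alpha)=\prod_{\gcd(k,d)=1}(\alpha-\zeta_d^k\bar\alpha)$ is the homogenisation of the $d$th cyclotomic polynomial, one checks that the \emph{new} prime factors appearing at level $n$ — that is, the candidate primitive prime divisors of $u_n(\alpha,\bar\alpha)$ — are precisely the primes dividing the integer $\Phi_n(\alpha,\bar\alpha)$ that do not divide $n$ or the discriminant. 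A standard lemma (going back to Carmichael) shows that any non-primitive prime divisor of $\Phi_n(\alpha,\bar\alpha)$ must divide $n$, that only the largest prime factor $P(n)$ can occur, and that it occurs only to a bounded power. Consequently, the failure of $u_n$ to possess a primitive divisor forces $|\Phi_n(\alpha,\bar\alpha)|$ to be supported on $P(n)$ alone and hence to be very small relative to its generic size.

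The second ingredient is an archimedean lower bound, and this is where the transcendence input enters. Generically $|\Phi_n(\alpha,\bar\alpha)|$ is comparable to $|\alpha|^{\phi(n)}$, but when $\alpha,\bar\alpha$ are complex conjugates one has $|\alpha^n-\bar\alpha^n|=2|\alpha|^n\,|\sin(n\theta)|$ with $\theta=\arg\alpha$, so the size can collapse whenever $n\theta$ is close to a multiple of $\pi$. Ruling this out amounts to bounding from below a linear form in the logarithms of $\alpha$ and $\bar\alpha$, and here I would invoke sharp estimates of Baker--W\"ustholz type (in the refined two-logarithm form of Laurent, Mignotte and Nesterenko). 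Combined with the divisibility constraint from the first paragraph, this yields an inequality that is incompatible with the absence of a primitive divisor once $n$ is large, giving an effective but enormous upper bound for the exceptional $n$, in the spirit of Stewart's original estimate.

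The genuine content of the theorem, and the hard part of any reconstruction, is the descent from this astronomical bound down to the stated range $n\leqslant 30$ and the exhaustive resolution of the finitely many survivors. For this I would follow the authors in specialising the transcendence estimates to the precise two-logarithm configuration and in deploying explicit congruence and elementary size arguments to eliminate all but small $n$; the residual values $4<n\leqslant 30$ with $n\neq 6$ then translate into finitely many Thue-type or generalised Ramanujan--Nagell equations, whose complete solution — carried out with substantial machine assistance — produces exactly the pairs $(a,b)$ recorded in the table. Thus the main obstacle is not the qualitative existence of a bound, which is routine once the cyclotomic and transcendence machinery is assembled, but the quantitative reduction to $n=30$ together with the delicate exhaustive verification yielding the precise exceptional list; this is why the result is quoted rather than reproved and is used here only as a black box.
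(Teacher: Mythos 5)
The paper gives no proof of this statement at all: it is imported verbatim from Bilu, Hanrot and Voutier \cite{MR1863855} and used purely as a black box, which is exactly the course you adopt, so there is nothing in the paper to compare your argument against. Your outline of the BHV strategy (cyclotomic factorisation $\alpha^n-\bar\alpha^n=\prod_{d\mid n}\Phi_d(\alpha,\bar\alpha)$, the Carmichael-type lemma confining non-primitive prime divisors of $\Phi_n$ to the largest prime factor $P(n)$ to bounded exponent, lower bounds via linear forms in two logarithms, reduction to $n\leqslant 30$, and machine-assisted resolution of the surviving Thue-type equations to produce the exceptional table) is a fair summary of the original proof's architecture, and your conclusion that the result should be cited rather than reproved is precisely what the paper does.
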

\begin{rem}
Here two Lucas pairs $(\alpha,\bar{\alpha})$ and $(\beta,\bar{\beta})$ are said to be equivalent if $\frac{\alpha}{\beta}=\frac{\bar{\alpha}}{\bar{\beta}}=\pm 1$. Thus, it is clear that if $(\alpha,\bar{\alpha})$ and $(\beta,\bar{\beta})$ are equivalent, then $u_n(\alpha,\bar{\alpha})=u_n(\beta,\bar{\beta})$ for all $n\in\mathbb{N}$.
\end{rem}
Now we have everything needed to prove the results stated in the introduction.
\begin{proof}[Proof of Theorem \ref{thm:mainthm1}]
Factorise $f(x)$ over the ring of integers $R$ of $\mathbb{Q}(\sqrt{D(f)})$ to obtain
\begin{displaymath}
y^q=x^2+ax+b=(x-\alpha)(x-\bar{\alpha}) \textrm{.} \end{displaymath}
where $\alpha=\frac{-a+\sqrt{D(f)}}{2}$. The ring $R$ is not a unique factorisation domain, so we work with ideals. Hence
\begin{equation}\label{eq:idealeqn}
\langle y \rangle^q=\langle x-\alpha \rangle \langle x-\bar{\alpha}\rangle\textrm{.}
\end{equation}
Now let $\mathfrak{p}$ be a prime ideal dividing both $\langle x-\alpha \rangle,\langle x-\bar{\alpha} \rangle$, so
\begin{displaymath}
x-\alpha\in\mathfrak{p} \quad \textrm{ and} \quad x-\bar{\alpha}\in\mathfrak{p} \end{displaymath}
and thus
\[ \alpha-\bar{\alpha}=\sqrt{D(f)}\in \mathfrak{p}\textrm{.} \]
It follows that $D(f)\in\mathfrak{p}$. In addition, $\mathfrak{p}\mid(x-\alpha)(x-\bar{\alpha})=y^q$, thus $\mathfrak{p}\mid y$ and hence $y\in\mathfrak{p}$. We now claim that $D(f)$ and $y$ are coprime. Suppose not. Then $\gcd(D(f),y)=p_1\cdots p_r$, for some $r\in\mathbb{N}$ where the $p_i$'s are distinct primes. Also
\[ 4y^q=(2x+a)^2-D(f) \] and so $p_1\cdots p_r\mid (2x+a)$. Then for some integers $k_1,k_2,k_3$ with $p_i\nmid k_3$ for each $i$, we have
\[ 4p_1^q\cdots p_r^qk_1=p_1^2\cdots p_r^2k_2+p_1\cdots p_rk_3 \] so that
\[4p_1^{q-1}\cdots p_r^{q-1}k_1=p_1\cdots p_rk_2+k_3\]
which implies $p_i\mid k_3$, a contradiction. Hence $D(f),y$ are coprime and thus there exist integers $m,n\in\mathbb{Z}$ with
\[ my+nD(f)=1 \textrm{.}\]
This in turn implies that $1\in\mathfrak{p}$, a contradiction to the fact that $\mathfrak{p}$ is prime. Hence $\langle x-\alpha \rangle, \langle x-\bar{\alpha}\rangle$ are coprime, and so by (\ref{eq:idealeqn}), we have
\[ \langle x-\alpha \rangle =I^q \]
for some integral ideal $I$. Let $q$ be coprime to $h$, then as $I^q$ is principal, $I$ too must be principal. Therefore
\[ \langle x-\alpha \rangle=\langle \beta \rangle ^q =\langle \beta^q \rangle \]
for some $\beta\in R$. From which we easily deduce
\[
x-\alpha=\epsilon\beta^q \] for some unit $\epsilon$ of $R$. In $R$, the only units are $\pm 1$ and so are themselves $q$-th powers. Thus
\[ x-\alpha=\delta^q \textrm{,} \] for some $\delta\in R$.
By applying the non-trivial Galois automorphism one obtains
\[ x-\bar{\alpha} =\bar{\delta}^q \]
and thus
\[ \bar{\alpha}-\alpha= \delta^q-\bar{\delta}^q\textrm{.} \]
which in turn yields
\begin{displaymath}
-\sqrt{D(f)}=k\sqrt{D(f)}u_q(\delta,\bar{\delta})\textrm{,} \end{displaymath}
where $k$ is some rational integer. Now $u_q(\delta,\bar{\delta})$ is an integer, hence $k=\pm 1$ which gives
\[
u_q(\delta,\bar{\delta})=\pm 1 \textrm{.} \]
Therefore the $q$-th term of the Lucas sequence $(u_n(\delta,\bar{\delta}))_{n\geqslant 1}$ fails to have a primitive divisor, so by Theorem \ref{thm:BHVthm}, coupled with the facts $D(f)$ is squarefree, $h>1$ and $q$ is prime means that $q\leqslant 3$. We have not taken into account the case where $q$ is not coprime to $h$. When $q\mid h$, the above method does not apply as $I^q$ principal does not imply $I$ principal in general. So for these values of $q$, we need to solve the equation $y^q=f(x)$ by hand.
\end{proof}
\begin{rem}
Actually, the proof of the above theorem tells us that we only need to check the cases $q=2,3$ and $q$ is a prime divisor of $h$. So we only need check at most $2+\omega(h)$ cases, where $\omega(h)$ is the number of prime factors of $h$. \end{rem}
\begin{proof}[Proof of Theorem \ref{thm:mainthm}] \label{thm:SEDE}
Let $f(z)=z^2+az+b$ be such that $a,b\in\mathbb{Z}$ and with discriminant $D(f)$ where $-D(f)\in\{7,11,19,43,67,163\}$. Let $R$ be the ring of integers of the splitting field of $f$ over $\mathbb{Q}$. In addition, let us assume $q>30$ and that there are integers $x,y$ which satisfy equation (\ref{eq:maineqn}). We have the factorisation of $f(x)$ as $(x-\alpha)(x-\bar{\alpha})$, for $\alpha=\frac{-a+\sqrt{D(f)}}{2}$. We may assume that $x-\alpha$ and $x-\bar{\alpha}$ are coprime in $R$. If $x-\alpha$ and $x-\bar{\alpha}$ have a common factor $d\in R$, then $d$ has to divide $\sqrt{D(f)}$, which is a prime of $R$ as $D(f)$ is a rational prime. This means that $d$ is either a unit or a unit multiple of $\sqrt{D(f)}$. Assume that $d=\pm\sqrt{D(f)}$ then we can re-write our equation $y^q=f(x)$ as \begin{equation} \label{eq:noncprime}
y^q=D(f)\left (\frac{x-\alpha}{\sqrt{D(f)}} \right ) \left( \frac{x-\bar{\alpha}}{\sqrt{D(f)}} \right )\textrm{.} \end{equation}
Now the terms on the RHS of (\ref{eq:noncprime}) are pairwise coprime. We know that the two bracketed terms are coprime so all we need to check is that $D(f)$ has no factors in common with $A=\frac{x-\alpha}{\sqrt{D(f)}}$ say. Suppose that $\epsilon$ is a non-trivial common factor then as $\epsilon\mid D(f)$ we have $\epsilon=\pm\sqrt{D(f)}$ or $\pm D(f)$. Applying the non-trivial Galois automorphism tells us that $\bar{\epsilon}=\pm \epsilon$. Now $\bar{\epsilon}\mid \bar{A}$, so $\epsilon\mid \bar{A}$ also. This is a contradiction and so $\epsilon$ must be a unit. Hence, as $R$ is a unique factorisation domain, each of $D(f),A,\bar{A}$ is a unit multiple of a $q$-th power. However, since $q>2$, $D(f)$ is not a $q$-th power so there are no integer solutions to (\ref{eq:noncprime}). Now assume that $x-\alpha$ and $x-\bar{\alpha}$ are coprime in $R$. Then equation (\ref{eq:maineqn}) implies that
\begin{displaymath}
x-\alpha=\pm \beta ^q \textrm{,} \end{displaymath}
for some $\beta\in R$. Once again $\pm 1$ are $q$-th powers so
\begin{displaymath}
x-\alpha=\gamma^q \end{displaymath}
for some $\gamma\in R$. Applying the non-trivial Galois automorphism yields
\begin{displaymath}
x-\bar{\alpha}=\bar{\gamma}^q\textrm{.} \end{displaymath}
The last two equations imply that
\begin{displaymath}
\bar{\alpha}-\alpha=\gamma^q-\bar{\gamma}^q\textrm{,} \end{displaymath}
which in turn yields
\begin{displaymath}
-\sqrt{D(f)}=k\sqrt{D(f)}u_q(\gamma,\bar{\gamma})\textrm{,} \end{displaymath}
for some $k\in\mathbb{Z}$. Once again $k=\pm 1$ which gives
\begin{equation}\label{eq:finaleq}
u_q(\gamma,\bar{\gamma})=\pm 1 \textrm{.} \end{equation}
As before we can now apply the result of Theorem \ref{thm:BHVthm} to conclude that $u_q(\gamma,\bar{\gamma})$ has a primitive prime divisor for all $q>30$ and equation (\ref{eq:finaleq}) is therefore untenable. Hence for (\ref{eq:maineqn}) to be soluble in integers $x,y$ we require that $q\leqslant 30$.
\newline\newline We will now prove the statement for $D(f)=-7$; the other cases follow similarly. Assume that we have a solution to (\ref{eq:maineqn}) for $q>13$. Then we know that equation (\ref{eq:finaleq}) holds for some $\gamma\in\mathbb{Z}\left[\frac{1+\sqrt{-7}}{2}\right]$, and the $q$th term in the Lucas sequence $(u_n(\gamma,\bar{\gamma}))_{n\geqslant 1}$ fails to have a primitive prime divisor. From Theorem \ref{thm:BHVthm} we have a complete list of conjugate pairs $(\gamma,\bar{\gamma})$ and positive integers $n$ such that $u_n(\gamma,\bar{\gamma})$ fails to have a primitive prime divisor. By the equivalence condition of Theorem \ref{thm:BHVthm} we may assume that the only candidate for $\gamma$ is $\frac{1+\sqrt{-7}}{2}$, consequently the only $n>13$ for which $u_n(\gamma,\bar{\gamma})$ fails to admit a primitive prime divisor, are $n=18,30$. Computing the values of $u_n\left (\frac{1+\sqrt{-7}}{2},\frac{1-\sqrt{-7}}{2}\right )$ for $n=18,30$, the values $\pm 1$ are never obtained, so there are no solutions to (\ref{eq:finaleq}) when $q>13$, which establishes the result for $D(f)=-7$.\newline\newline We are now left to prove the claims when $D(f)=-3,-8$. Let $f(x)=x^2+ax+b$ be an integral polynomial of discriminant $-3$ (as before, the case $D(f)=-8$ is similar). Then for integers $x,y$, $y>1$ with
\begin{displaymath}
y^q=x^2+ax+b\textrm{,}
 \end{displaymath} we have the factorisation
\begin{equation} \label{eq:DE1}
y^q=(x-\alpha)(x-\bar{\alpha}) \textrm{,}
\end{equation}
where $\alpha=\frac{-a+\sqrt{-3}}{2}$. Note that the RHS of equation (\ref{eq:DE1}) lies in the ring $\mathbb{Z}[\omega]$. As before we may assume that $x-\alpha$ and $x-\bar{\alpha}$ are coprime in $\mathbb{Z}[\omega]$. Therefore, we have from equation (\ref{eq:DE1})
\begin{equation}\label{eq:DE2}
x-\alpha=\delta \cdot \gamma^q \end{equation}
where $\gamma,\delta \in \mathbb{Z}[\omega]$ with $\delta$ a unit. \newline\newline
Since $\delta$ is a unit, it is itself a $q$-th power (since $q$ is a prime larger than 3) so we can absorb $\delta$ into the $\gamma$, and from equation (\ref{eq:DE2}) we have
\begin{equation} \label{eq:DE4}
x-\alpha = \epsilon^q \textrm{,} \end{equation}
 for some $\epsilon\in\mathbb{Z}[\omega]$. Again applying the non-trivial Galois automorphism gives
\begin{equation}\label{eq:DE5}
x-\bar{\alpha}=\bar{\epsilon}^q \textrm{.}\end{equation}
Subtracting (\ref{eq:DE5}) from (\ref{eq:DE4}) gives \begin{displaymath}
\epsilon ^q - \bar{\epsilon}^q=\bar{\alpha}-\alpha=-\sqrt{-3}\textrm{.} \end{displaymath}
Factorising the LHS of the above gives
\begin{displaymath}
(\epsilon-\bar{\epsilon})u_q(\epsilon,\bar{\epsilon})=-\sqrt{-3} \textrm{.} \end{displaymath}
Note that $\epsilon-\bar{\epsilon}=c\sqrt{-3}$ for some integer $c$. Hence,
\begin{displaymath}
cu_q(\epsilon,\bar{\epsilon})=-1{,} \end{displaymath}
and since $u_q(\epsilon,\bar{\epsilon})$ is an integer we know $c \mid 1 $. So $c=\pm 1$ and we end up with the following equation
\begin{equation} \label{eq:DE7}
u_q(\epsilon,\bar{\epsilon})=\pm 1\textrm{.} \end{equation}
It follows immediately from Theorem \ref{thm:BHVthm}, that $u_q(\epsilon,\bar{\epsilon})$ has a primitive prime divisor for all $q>30$, so for (\ref{eq:DE7}) to hold we must have $q<30$. Moreover, the only pairs $(n,\gamma)\in\mathbb{N}\times\mathbb{Z}[\omega]$ with $n>4$ such that $u_n(\gamma,\bar{\gamma})$ fails to have a primitive prime divisor are $\left (10,\pm \left ( \frac{5\pm\sqrt{-3}}{2}\right ) \right )$. Since $q$ is prime, we conclude that there are no solutions to (\ref{eq:DE7}).
\end{proof}
Now consider Corollary \ref{cor:intsolns}. We will prove only the case that $D(f)=-7$ since this is the situation which gives rise to the highest bound for the exponent $q$.
\begin{proof}[Proof of Corollary \ref{cor:intsolns}]
Note that for the equation \begin{displaymath}
y^q=f(x) \end{displaymath}
to have integer solutions $x,y$ with $\vert y \vert >1$ we have that equation (\ref{eq:finaleq}) holds, where $\beta\in\mathbb{Z}\left[\frac{1+\sqrt{-7}}{2} \right ]$. As in the proof of Theorem \ref{thm:mainthm} the only candidate for $\beta$ is $\frac{1+\sqrt{-7}}{2}$. When $\beta=\frac{1+\sqrt{-7}}{2}$, we see that equation (\ref{eq:finaleq}) for $q\geqslant 5$ is only satisfied for $q=5,13$. It may be assumed that $q$ is prime. Therefore, to fully solve this equation we need only look at the cases $q=2,3,5,13$. Without loss take $f(x)=x^2+x+2$, since it has discriminant $-7$. First solve the equation $y^2=f(x)$ in integers $x,y$. Completing the square gives
\begin{displaymath}
y^2=\left (x+\frac{1}{2} \right )^2+7/4 \end{displaymath}
and multiplying through by 4 gives
\begin{displaymath}
(2y)^2=(2x+1)^2+7 \end{displaymath} and so
\begin{displaymath}
(2y-2x-1)(2y+2x+1)=7\textrm{.} \end{displaymath}
So it is clear that $(2y-2x-1)=\pm 1,\pm 7$ and running through the possibilities yields that $x=-2,1$ and $y=2$.
\newline\newline
Note that the zeros of $f$ are $\alpha=\frac{-1+\sqrt{-7}}{2}$ and $\bar{\alpha}=\frac{-1-\sqrt{-7}}{2}$. So we have
\begin{displaymath}
y^q=(x-\alpha)(x-\bar{\alpha})\textrm{.}\end{displaymath}
As in the proof of Theorem {\ref{thm:mainthm}} assume that the two factors on the RHS of the equation are coprime. Therefore
\begin{displaymath}
x-\alpha =\pm \beta^q \textrm{,}\end{displaymath}
for some $\beta\in\mathbb{Z}[\frac{1+\sqrt{-7}}{2}]$. We only need to check the cases that $q=3,5,13$. Now $-1$ is a perfect $q$-th power, so
\begin{displaymath}
x+\frac{1-\sqrt{-7}}{2}=\epsilon^q \textrm{,} \end{displaymath}
for some $\epsilon\in\mathbb{Z}[\frac{1+\sqrt{-7}}{2}]$. Write $\epsilon=\frac{U+V\sqrt{-7}}{2}$ and by substituting in the above
\begin{equation} \label{eq:refeqn}
2^qx+2^{q-1}-2^{q-1}\sqrt{-7}=(U+V\sqrt{-7})^q \textrm{.} \end{equation}
First deal with the case $q=13$. Expanding out the bracket in equation (\ref{eq:refeqn}) and equating real and imaginary parts yields
\begin{eqnarray}
\label{eq:firstq13} f_1(U,V)=-4096 \\
\nonumber \\
\label{eq:secondq13} g_1(U,V)=8192x+4096 \end{eqnarray}
where $f_1(U,V)=V(13U^{12}-2002U^{10}V^2+63063U^8V^4-588588U^6V^6+1716715U^4V^8-1310946U^2V^{10}+117649V^{12})$ \newline\newline and \newline\newline $g_1(U,V)= U^{13}-546U^{11}V^2+35035U^9V^4-588588U^7V^6+3090087U^5V^8-4806802U^3V^{10}+1529437UV^{12}$.
\newline\newline From (\ref{eq:firstq13}), $V\mid 4096$. Using the polroots command in PARI, see \cite{pari}, we compute polroots$\left (f(x)+\frac{4096}{V}\right )$ where
\begin{displaymath}
f(x)=\frac{f_1(x,V)}{V} \end{displaymath} where $V$ is fixed and takes on the values $\pm 2^d$ where $d$ runs from 0 to 12 inclusive. Picking out the integer solutions yields
\begin{displaymath}
V=1,U=\pm 1\textrm{.}
\end{displaymath}
This implies that
\begin{displaymath}
g_1(U,V)=\pm 741376 \end{displaymath}
which, by substituting into (\ref{eq:secondq13}), yields $x=-91$ or 90 and so we conclude that $y=2$. By substituting $q=5$ into equation (\ref{eq:refeqn}) and solving in the same way we find that the only solutions to \begin{displaymath}
y^5=x^2+x+2 \end{displaymath}
are $x=-6,5$ and $y=2$. Similarly for $q=3$ we find that the only solutions to
\begin{displaymath}
y^3=x^2+x+2 \end{displaymath}
are $x=-3,2$ and $y=2$. Therefore, from the remark below Theorem \ref{thm:mainthm}, the only solutions to the equation $y^q=f(x)$ where $f(x)$ has discriminant $-7$ are $y=2$ and $q=2,3,5,13$ and so we have proven part (a) of Corollary \ref{cor:intsolns}.
\end{proof}

\section{Applications to polynomially generated sequences}
In this section we show how Theorems \ref{thm:mainthm1},\ref{thm:mainthm} and Corollary \ref{cor:intsolns} can be applied to deduce perfect power results for generalised Sylvester sequences of types 1,2 and 3.

\begin{proof}[Proof of Corollary~\ref{cor:gstp1}] We are looking to solve the equation $y^q=x^2-x+1$ in integers $x,y,q$ where $y,q>1$. Since $x^2-x+1$ has discriminant equal to $-3$ we see from Corollary \ref{cor:intsolns} that the only integer solution $(y,q)$ to this equation is $(7,3)$. Hence if we have a perfect power term in such a sequence, the previous term $x$ must satisfy
\begin{displaymath}
x^2-x+1=343\textrm{.} \end{displaymath}
Solving the previous equation gives $x=-18,19$. So to see 343 appearing in our generalised Sylvester sequence of type 1, we need the previous term needs to be 19 since all terms in the sequence are positive. However, 19 is not the image of any integer under the mapping $z\rightarrow z^2-z+1$, so if we have 343 appearing it must be because we have chosen 19 as our initial input. This concludes the proof.
\end{proof}
\begin{rem}
From the above result, we see that the Sylvester sequence has no perfect powers since it is $G^{(1)}(2)$.
\end{rem}
\begin{lem}
The only perfect power terms in a generalised Sylvester sequence of type 2 are $G_0^{(2)}(a)$ when $a$ is a perfect power. \end{lem}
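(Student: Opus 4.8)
The plan is to follow the template of the proof of Corollary~\ref{cor:gstp1}, substituting the discriminant $-4$ for the discriminant $-3$ used there. First I would record that the iterating polynomial for type $2$ is $g_2(x)=x^2-2x+2$, whose discriminant is $D(g_2)=4-8=-4$. Completing the square gives $g_2(x)=(x-1)^2+1$, so that a value $y^q=g_2(x)$ with $y,q>1$ is, after the substitution $w=x-1$, exactly a solution of $y^q=w^2+1$. This is precisely the equation flagged in the remark following Corollary~\ref{cor:intsolns} as the $D(f)=-4$ instance of the method.

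Next I would invoke Lebesgue's theorem \cite{lebesgue1850}: the equation $y^q=w^2+1$ has no integer solutions with $\lvert y\rvert>1$ and $q>1$, the only integral solutions being $w=0$, $y=\pm1$. It follows that $g_2(x)$ is never a perfect power of exponent greater than $1$ with base exceeding $1$, for any integer $x$ whatsoever. Unlike the type-$1$ situation, there is no sporadic exception (no analogue of the solution $(y,q)=(7,3)$ that produced $G_1^{(1)}(19)$), so the underlying Diophantine input is genuinely empty of non-trivial solutions.

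Finally I would transfer this to the sequence. For $n\geqslant1$ one has $G_n^{(2)}(a)=g_2\bigl(G_{n-1}^{(2)}(a)\bigr)$, and the hypothesis $a>m=2$ together with the monotonicity remark following the definition guarantees that each $G_{n-1}^{(2)}(a)$ is a positive integer exceeding $1$. Hence if $G_n^{(2)}(a)=y^q$ with $n\geqslant1$ and $q>1$, then writing $x=G_{n-1}^{(2)}(a)$ we obtain $y^q=(x-1)^2+1$ with $x-1\geqslant1$, forcing $y>1$ and contradicting Lebesgue's theorem. Thus no term of index $n\geqslant1$ is a perfect power. The only surviving term is $G_0^{(2)}(a)=a$, which is a perfect power exactly when $a$ is, yielding the claimed classification.

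I do not expect a substantive obstacle: the type-$2$ case is in fact \emph{easier} than the type-$1$ case treated in Corollary~\ref{cor:gstp1}, since the emptiness of the solution set to $y^q=w^2+1$ removes the need to check whether any exceptional value lies in the image of the iteration map. The only point demanding a little care is confirming the strict inequality $x>1$ for the predecessor term, so that the resulting base $y$ is automatically larger than $1$ and Lebesgue's result applies; this is immediate from the positivity and strict monotonicity of the sequence noted just after the definition.
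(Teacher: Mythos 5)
Your proposal is correct and follows essentially the same route as the paper: both reduce $y^q=x^2-2x+2$ via its discriminant $-4$ to Lebesgue's equation $y^q=w^2+1$ and conclude from his theorem that no non-trivial solutions exist. Your version merely makes explicit the completing-the-square substitution and the transfer back to the sequence terms, which the paper leaves implicit.
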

\begin{proof}
We are looking for integer solutions to the equation
\begin{displaymath}
y^q=x^2-2x+2\textrm{.} \end{displaymath}
The polynomial on the RHS of the above has discriminant equal to $-4$, and so by the remark below the proof of Theorem \ref{thm:mainthm}, we can invoke Lebesgue's result, \cite{lebesgue1850} to show that this equation has no integer solution $(x,y)$ with $y>1$.
\end{proof}
\begin{cor}
The only perfect power terms in a generalised Sylvester sequence of type 3 are $G_0^{(3)}(a)$ when $a$ is a perfect power, and $G_1^{(3)}(20)$. \end{cor}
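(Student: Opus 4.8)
The plan is to follow the template of the proof of Corollary~\ref{cor:gstp1} for type~$1$, since the generating polynomial for a generalised Sylvester sequence of type~$3$ is $g_3(x)=x^2-3x+3$, whose discriminant is $9-12=-3$. Consequently any power value $y^q=g_3(x)$ with $q>1$ is governed by part~(h) of Corollary~\ref{cor:intsolns}, and the entire deep content of the argument is already in hand.

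First I would separate off the trivial contribution of the initial term: $G_0^{(3)}(a)=a$ is a perfect power exactly when $a$ is, so it remains to analyse $G_n^{(3)}(a)$ for $n\geqslant 1$. Such a term is an iterate, $G_n^{(3)}(a)=g_3(G_{n-1}^{(3)}(a))$, so writing $x=G_{n-1}^{(3)}(a)$, a perfect power term $y^q$ with $\vert y\vert>1$, $q>1$ forces the equation $y^q=x^2-3x+3$. By Corollary~\ref{cor:intsolns}(h), the only solution with $y>1$ is $(y,q)=(7,3)$, whence $y^q=343$.

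Next I would solve $x^2-3x+3=343$, i.e.\ $x^2-3x-340=0$, obtaining $x=20$ and $x=-17$; since $a>m=3$ guarantees (by the remark following the definition) that the sequence is positive and strictly increasing, only $x=20$ is admissible as a term. Hence the predecessor of any perfect power term must equal $20$. It then remains to decide whether $20$ can itself arise as an interior term $G_{n-1}^{(3)}(a)$ with $n-1\geqslant 1$, which would require $20$ to lie in the image of $g_3$, i.e.\ $z^2-3z+3=20$ for some integer $z$. That quadratic is $z^2-3z-17=0$ with discriminant $77$, not a perfect square, so $20$ is not the image of any integer under $g_3$. Therefore $343$ can only appear when $20$ is chosen as the initial input, giving $G_1^{(3)}(20)=g_3(20)=400-60+3=343=7^3$. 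Finally I would verify that this input is legitimate: $a=20>3=m$ and $\gcd(20,3)=1$, so $G^{(3)}(20)$ is indeed a generalised Sylvester sequence of type~$3$.

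There is no remaining analytic obstacle: the bound on $q$ and the resolution of $y^q=f(x)$ for discriminant $-3$ are supplied wholesale by Corollary~\ref{cor:intsolns}. The only points requiring care are the positivity argument that discards $x=-17$ and the short image computation showing $20\notin g_3(\mathbb{Z})$; together these pin the unique nontrivial occurrence to position $1$ with starting value $20$, which is exactly the assertion of the corollary.
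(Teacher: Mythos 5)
Your proof is correct, and it takes a genuinely different---and in fact more careful---route than the paper's. The paper disposes of type 3 in one line: from the polynomial identity $x^2-3x+3=(x-1)^2-(x-1)+1$ it asserts $G^{(3)}(a)=G^{(1)}(a-1)$ and then cites Corollary~\ref{cor:gstp1}. You instead re-run the type-1 template directly on $g_3(x)=x^2-3x+3$: the discriminant is $-3$, so Corollary~\ref{cor:intsolns}(h) forces any perfect power term at position $n\geqslant 1$ to equal $343$; solving $x^2-3x+3=343$ and discarding $x=-17$ by positivity pins the predecessor to $20$; and the non-square discriminant $77$ of $z^2-3z-17$ shows $20\notin g_3(\mathbb{Z})$, so $343$ occurs only as $G_1^{(3)}(20)$, with the admissibility check $20>3$, $\gcd(20,3)=1$. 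It is worth recording that the paper's shortcut is not sound as written: $g_3=g_1\circ\phi$ with $\phi(x)=x-1$ is composition with a shift, not conjugation by one, so the iterates do not correspond. For instance $G^{(3)}(4)=(4,7,31,871,\dots)$ while $G^{(1)}(3)=(3,7,43,1807,\dots)$; the claimed sequence identity holds only at $n=1$. The reduction can be repaired---every term $G_n^{(3)}(a)$ with $n\geqslant 1$ equals $g_1(G_{n-1}^{(3)}(a)-1)$, hence is a power value of $g_1$, and the predecessor and image analyses transfer term by term---but that repair is in substance exactly your direct argument. Your version costs a few lines of elementary computation and in exchange yields a self-contained proof that does not rest on the false whole-sequence identity.
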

\begin{proof}
To see this, we simply observe that generalised sequences of type 3 are more or less the same as those of type 1.
Since $x^2-3x+3=(x-1)^2-(x-1)+1$, we see that $G^{(3)}(a)=G^{(1)}(a-1)$. The statement of this corollary then follows from Corollary \ref{cor:gstp1}.
\end{proof}
We finish with an example to illustrate that the bound in Theorem \ref{thm:mainthm1} is sharp, and to show how we can find all power terms in the sequences coming from this polynomial by iterating it upon an integral input.
\begin{exa}
Let $f(x)=x^2+x+6$. We wish to solve the equation $y^q=f(x)$ in integers $x,y$, $q\geqslant 2$. Note that $D(f)=-23$, so $f(x)$ satisfies the hypotheses of Theorem \ref{thm:mainthm1} and we conclude at once that $q\leqslant 3$ since $h(\mathbb{Q}(\sqrt{-23}))=3$. To show that our bound is sharp we need only show there are solutions when $q=3$. When $q=3$ the equation defines an elliptic curve and the equation can be solved by using the MAGMA package, \cite{MR1484478}.
We find that the integer solutions to the equation are
\[
\begin{aligned}
(x,y)\in&\{(22,8),(-23,8),(-42,12),(41,12),(-2,2),(1,2),(14,6),\\&(-15,6),(3625,236),
(-3626,236)\}\textrm{.}
\end{aligned}
\]
The case $q=2$ is straightforward, we can rearrange the equation a little to obtain
\begin{displaymath}
(2y-2x-1)(2y+2x+1)=23. \end{displaymath}
Using the fact that the two factors on the LHS are factors of 23 gives that the only solutions in this case are
\begin{displaymath}
(x,y)\in\{(5,\pm 6),(-6,\pm 6)\}\textrm{.} \end{displaymath}
As before we can now use this information to show that for $n\geqslant 1$, $a\in\mathbb{Z}$,
$f^{n}(a)$ is a perfect power exactly when $n=1$ and
$$
a=-3626,-42,-23,-15,-6,-2,1,5,14,22,41,3625.
$$
Hence no term beyond the first in the sequence $(f^{n}(a))_{n\geqslant 1}$ is a perfect power.
\end{exa}


\def\cprime{$'$}

\end{document}